\newtheorem{thm}{Theorem}[section]
\newtheorem{lemma}[thm]{Lemma}
\theoremstyle{definition}
\newtheorem{rem}[thm]{Remark}
\newtheorem{example}[thm]{Example}
\newtheorem{define-thm}[thm]{Definition-Theorem}
\newcommand{\End}{{\rm End}}
\newcommand{\twosilt}{2\text{-}{\rm silt}\,}
\newcommand{\sttilt}{{\rm s}\tau\text{-}{\rm tilt}}
\begin{document}

\title{
On $\tau$-tilting finiteness of
block algebras of direct products of finite groups}
\author{Yuta Kozakai}
\address[Author]{Department of Mathematics,  
Tokyo University of Science}
\email{kozakai@rs.tus.ac.jp
}
%\date{\today}
\subjclass[2020]
{16G20, 20C20}

\keywords{
$\tau$-tilting finiteness,
block algebras of finite groups,
tensor products of symmetric algebras}

\maketitle
\begin{abstract}
We discuss
finiteness/infiniteness of $\tau$-tilting modules
over tensor products of two symmetric algebras.
As an application, we consider block algebras of direct products of finite groups.
\end{abstract}

\section{Introduction}\label{introduction}
In this paper, by an algebra we mean a
finite-dimensional algebra over a fixed algebraically
closed field $k$.
In 2014, the notion of {\it support $\tau$-tilting modules}
was introduced by Adachi, Iyama and Reiten (\cite{AIR2014}),
and has been studied since by many researchers.
For example, the support $\tau$-tilting modules are
in bijection with many representation theoretical
objects such as two-term silting complexes (\cite{AIR2014}), 
functorially finite torsion classes (\cite{AIR2014}),
left finite semibricks (\cite{As2020}),
and $t$-structures (\cite{KY2014}) and more.
Since the notion of {\it $\tau$-tilting finiteness/infiniteness}
was introduced by Demonet, Iyama and Jasso \cite{DIJ2019},
several cases of $\tau$-tilting finite/infinite algebras have been verified.
For example the $\tau$-tilting finiteness/infiniteness
of the following classes of algebras are verified:
preprojective algebras of Dynkin type (\cite{M2014}),
radical square zero algebras (\cite{Ad2016}),
cycle finite algebras (\cite{MS2016}),
Brauer graph algebras (\cite{AAC2018}),
%
%\color{red}
tame blocks of group algebras (\cite{EJR2018})
%\color{black}
%
biserial algebras (\cite{Mo2019}),
simply connected algebras (\cite{W2019}),
tilted and cluster tilted algebras (\cite{Z2020}),
certain block algebras of finite groups covering cyclic blocks
(\cite{KK2020, KK2021, K2022}),
triangular matrix algebras (\cite{AH2021}),
certain self-injective algebras
of tubular type (\cite{AHMW2021}),
tensor product algebras of simply connected algebras (\cite{MW2021}),
classical Schur algebras (\cite{W2022})
%
%\color{red}
and other kinds of $\tau$-tilting finite 
algebras can be seen in the list in Section 10 of \cite{T2021}.
%\color{black}
Our aim is to give such examples among block algebras
of finite groups.
In fact, there are few studies associated to both 
$\tau$-tilting theory and modular representation theory
nevertheless the former has been developing rapidly
since it was introduced.
Therefore 
we consider the $\tau$-tilting finiteness/infiniteness
of block algebras of direct products of finite groups.
In order to do so,
we investigate
more generally the $\tau$-tilting finiteness/infiniteness of 
the tensor products of two symmetric algebras.
\begin{thm}[See Theorem \ref{theorem1}]
Let $A$ and $B$ be indecomposable symmetric algebras
over an algebraically closed field $k$
which are not local algebras.
Then the tensor product algebra $A \otimes_k B$ is a 
$\tau$-tilting infinite algebra.
\end{thm}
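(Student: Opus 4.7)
The plan is to reduce, via idempotent truncation, to the case where each of $A$ and $B$ has exactly two simple modules, and then to detect $\tau$-tilting infiniteness of the resulting four-vertex tensor product through its radical-square-zero quotient by means of Adachi's criterion.

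First, since $\tau$-tilting finiteness is a Morita invariant, I may assume $A$ and $B$ are basic. Because each is indecomposable and non-local, its Gabriel quiver is connected with at least two vertices, so one can choose primitive orthogonal idempotents $e_1,e_2 \in A$ and $f_1,f_2 \in B$ corresponding to adjacent vertices in the respective Gabriel quivers. The symmetric hypothesis enters crucially here: the non-degenerate symmetrizing form gives $\dim e_i(\rad A/(\rad A)^2)e_j = \dim e_j(\rad A/(\rad A)^2)e_i$, so between each chosen pair of vertices the Gabriel quiver carries arrows in \emph{both} directions (and likewise for $B$). Setting $e = e_1+e_2$, $f = f_1+f_2$, and $g = e \otimes f \in A \otimes_k B$, the corner algebra $g(A\otimes_k B)g$ equals $(eAe)\otimes_k(fBf)$ and remains symmetric in each factor; by the Eisele-Janssens-Raedschelders reduction theorem for $\tau$-rigid modules, it suffices to prove this corner is $\tau$-tilting infinite.

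Next, write $A' = eAe$ and $B' = fBf$. I would compute the Gabriel quiver $Q$ of $A'\otimes_k B'$ via a K\"unneth-type decomposition of $\Ext^1$ between simples:
\[
\dim \Ext^1_{A'\otimes_k B'}(S_i\otimes T_k,\, S_j\otimes T_l) = \delta_{k,l}\,\dim \Ext^1_{A'}(S_i,S_j) + \delta_{i,j}\,\dim \Ext^1_{B'}(T_k,T_l).
\]
Thus distinct vertices of $Q$ are joined only when they differ in exactly one coordinate, and $Q$ has four vertices arranged in a cycle with arrows in both directions between each consecutive pair, possibly enriched by loops or parallel arrows coming from loops or multiple arrows in $A'$ or $B'$.

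Finally, since $k$ is algebraically closed, $\rad(A'\otimes_k B') = \rad A'\otimes B' + A'\otimes \rad B'$, so the radical-square-zero quotient of $A'\otimes_k B'$ is isomorphic to $kQ/(\rad kQ)^2$. Because bricks of a quotient are bricks of the original algebra and brick-finiteness is equivalent to $\tau$-tilting finiteness by Demonet-Iyama-Jasso, $\tau$-tilting finiteness passes to quotients, so it is enough to show that $kQ/(\rad kQ)^2$ is $\tau$-tilting infinite. By Adachi's classification of $\tau$-tilting finite radical-square-zero algebras, this amounts to exhibiting a non-Dynkin connected component of the separated quiver $Q^s$. The main obstacle is precisely this combinatorial check: in the minimal case (single arrow in each direction around the 4-cycle) $Q^s$ consists of two disjoint $4$-cycles of type $\tilde{A}_3$, and one must argue that extra loops or parallel arrows in $A'$ or $B'$ only enlarge these components while keeping them non-Dynkin. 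Once this is done, Adachi's criterion together with the preceding reductions delivers $\tau$-tilting infiniteness of $A\otimes_k B$.
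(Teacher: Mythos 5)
Your proposal is essentially correct but follows a genuinely different route from the paper. The paper works directly on $A\otimes B$: after disposing of the multiple-arrow case, it picks minimal non-vanishing cycles $C_A$, $C_B$ in the two quivers and writes down, as an explicit bound quiver representation, a one-parameter family of pairwise non-isomorphic bricks $M^{(\lambda)}$, concluding by the Demonet--Iyama--Jasso brick criterion; the only external inputs are that criterion and the Leszczy\'nski description of $Q_{A\otimes B}$. You instead shrink the problem first: idempotent truncation to a two-vertex corner in each factor (using that the quiver of a symmetric algebra has equally many arrows in both directions between any two vertices), then a passage to the radical-square-zero quotient of the resulting four-vertex tensor product, reducing everything to a small combinatorial check. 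Your route is shorter and more conceptual, and it makes transparent \emph{where} the infiniteness comes from (an $\widetilde{A}_3$ hidden in the bidirectional $4$-cycle); the paper's route is more self-contained and produces explicit bricks on $A\otimes B$ itself. The separated-quiver computation you describe (two disjoint $4$-cycles) is correct.

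Two points need repair before your argument is citable. First, the reduction ``$A\otimes B$ $\tau$-tilting finite $\Rightarrow$ $g(A\otimes B)g$ $\tau$-tilting finite'' is not the Eisele--Janssens--Raedschelders theorem (which concerns quotients by central elements of the radical); the correct reference is Demonet--Iyama--Reading--Reiten--Thomas, where $\mathrm{tors}(eAe)$ is shown to be a lattice quotient of $\mathrm{tors}(A)$, so finiteness passes to corners. Second, Adachi's criterion is not literally ``some connected component of the separated quiver $Q^s$ is non-Dynkin $\Rightarrow$ $\tau$-tilting infinite'': a local radical-square-zero algebra with two loops has a Kronecker separated quiver yet is $\tau$-tilting finite, so loops must be treated separately in his classification. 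In your situation this is harmless, but the cleanest fix avoids Adachi entirely: from $kQ/(\rad kQ)^2$ pass to the further quotient obtained by killing all loops, all extra parallel arrows, and one arrow from each bidirectional pair so that the four vertices alternate between sources and sinks. That quotient is the path algebra of an alternating $\widetilde{A}_3$, a tame hereditary algebra with a $\mathbb{P}^1$-family of bricks, and since bricks of a quotient are bricks of the algebra, $\tau$-tilting infiniteness propagates back up through all your reductions.
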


Finally, combining this result
and previous work on $\tau$-tilting theory,
we get the following result
on the $\tau$-tilting finiteness/infiniteness
of block algebras of direct products of finite groups.
\begin{thm}[See Theorem \ref{theorem2}]
Let $G$ and $H$ be finite groups,
$k$ an algebraically closed field
and $B$ a block of the group algebra $k[G\times H]$
of the direct product of $G$ and $H$.
If the block algebra $B$ is $\tau$-tilting finite,
then there is a block $A$ of $kG$ or of $kH$
such that the set of support $\tau$-tilting modules over $B$
is isomorphic to that of $A$ as posets.
\end{thm}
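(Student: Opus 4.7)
The idea is to reduce the assertion to the preceding theorem by using the block decomposition of the group algebra of a direct product.

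The first step is the algebra isomorphism $k[G\times H]\cong kG\otimes_k kH$ together with the identification of centres $Z(k[G\times H])\cong Z(kG)\otimes_k Z(kH)$: primitive central idempotents of $k[G\times H]$ are exactly tensors $e_1\otimes e_2$ of primitive central idempotents of $kG$ and $kH$, and hence the block $B$ of $k[G\times H]$ in the statement decomposes uniquely as $B=B_1\otimes_k B_2$ for blocks $B_1\subseteq kG$ and $B_2\subseteq kH$, both of which are indecomposable symmetric $k$-algebras. Applying the contrapositive of the preceding theorem then forces one of $B_1,B_2$ to be local; after relabelling if necessary I may assume $B_2$ is local, and the block $A$ asserted by the theorem will be $A:=B_1$.

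It remains to show that $\sttilt(A)\cong \sttilt(A\otimes_k B_2)$ as posets whenever $B_2$ is a local symmetric algebra. My plan is to work on the silting side via the Adachi--Iyama--Reiten bijection between support $\tau$-tilting modules and two-term silting complexes in $K^b(\mathrm{proj}\,-)$. Because $B_2$ is local, idempotent lifting identifies the indecomposable projective $(A\otimes_k B_2)$-modules with $P\otimes_k B_2$ for $P$ indecomposable projective over $A$. Thus the functor $-\otimes_k B_2\colon K^b(\mathrm{proj}\,A)\to K^b(\mathrm{proj}\,(A\otimes_k B_2))$ sends two-term silting complexes to two-term silting complexes and is compatible with the silting partial order; on objects it is inverted by the reduction $-\otimes_{B_2}(B_2/\mathrm{rad}\,B_2)$.

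\textbf{Main obstacle.} The heart of the argument is the bijectivity of the above functor at the level of two-term silting complexes. The terms of any such complex over $A\otimes_k B_2$ already come from $A$ by the idempotent-lifting observation, so the real difficulty is to show that the differential is, up to isomorphism in the homotopy category, of tensor-product form $d'\otimes\mathrm{id}_{B_2}$ for some morphism $d'$ between projective $A$-modules. I expect this step to occupy the bulk of the proof and to use the locality and symmetry of $B_2$ essentially.
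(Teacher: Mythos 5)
Your reduction is exactly the paper's: decompose $k[G\times H]\cong kG\otimes_k kH$ into blocks $b_i\otimes b'_j$ (indecomposable symmetric algebras), rule out the case where both tensor factors are non-local via the contrapositive of Theorem \ref{theorem1}, and then handle the remaining case by a poset isomorphism $\sttilt(B_1\otimes_k B_2)\cong\sttilt(B_1)$ when $B_2$ is local. The difference lies entirely in that last step: the paper disposes of it by citing \cite[Theorem 2.1]{AH2021}, which states precisely that tensoring with a finite-dimensional local algebra does not change the poset of support $\tau$-tilting modules, whereas you set out to prove it from scratch on the silting side and explicitly leave the decisive point unproven, namely that every two-term silting complex over $B_1\otimes_k B_2$ is isomorphic in the homotopy category to one whose differential has the form $d'\otimes\mathrm{id}_{B_2}$. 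As submitted, this is a genuine gap, and the obstacle you flag is real: you cannot sidestep it by the standard reduction modulo an ideal generated by central elements of the radical, because the ideal $B_1\otimes\rad B_2$ need not be generated by central elements of $B_1\otimes B_2$ when the local block $B_2$ is noncommutative (e.g.\ a block of defect a quaternion $2$-group in characteristic $2$). So either carry out your silting argument in full or, as the paper does, invoke \cite[Theorem 2.1]{AH2021}. Everything up to that point --- the identification of the blocks of $k[G\times H]$ as tensor products of blocks, and the application of Theorem \ref{theorem1} --- agrees with the paper and is correct.
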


Throughout this paper,
the symbol $k$ denotes an algebraically closed field.
Algebras are always finite-dimensional over $k$
and tensor products are always taken over $k$.
For the notation of quivers and 
the basic results on $k$-linear representations of quivers,
we refer to the book \cite{ASS2006}.
In particular, for a finite dimensional algebra $A$,
we denote  by $Q_A$ the Ext-quiver of $A$
and by $\mathcal{I}_A$ the ideal of $kQ_A$
with $A\cong kQ_A/\mathcal{I}_A$.

%%%%%%%%%%%%%%%%%%%%%%%%%%%%%%%%
%%%%%%%%%%%%%%%%%%%%%%%%%%%%%%%%
%%%%%%%%%%%%%%%%%%%%%%%%%%%%%%%%
%%%%%%%%%%%%%%%%%%%%%%%%%%%%%%%%
%%%%%%%%%%%%%%%%%%%%%%%%%%%%%%%%
%%%%%%%%%%%%%%%%%%%%%%%%%%%%%%%%
\section{Tensor products of algebras}\label{Preliminaries}
In this section
we recall basic definitions and properties
of tensor products of algebras
and their structures of $k$-algebras.
Let $A\cong kQ_A/\mathcal{I}_A$ and $B\cong kQ_B/\mathcal{I}_B$
be algebras.
Then their tensor product $A\otimes B:=A\otimes_k B$
is again a $k$-algebra with the multiplication induced by
the rule $(a_1\otimes b_1)(a_2\otimes b_2)=a_1a_2\otimes b_1b_2$. 
To determine the structure of this $k$-algebra,
the following theorem is essential.

\begin{thm}[{\cite[Lemma 1.3]{L1994}}]
Let $Q_{A\otimes B}$ be the following quiver
$$(Q_{A\otimes B})_0=(Q_{A})_0\times (Q_{B})_0,
(Q_{A\otimes B})_1=((Q_{A})_1\times (Q_{B})_0)
\cup ((Q_{A})_0\times (Q_{B})_1).$$
Let 
$ \mathcal{I}_{A\otimes B} $
be an ideal of $kQ_{A\otimes B}$ generated by
all elements in $(\mathcal{I}_{A} \times (Q_{B})_0)
\cup ((Q_{A})_0\times \mathcal{I}_{B})$
and by all elements in
$\{(\alpha, e')(e, \beta)-(e, \beta)(\alpha, e')\:|\:
e \in (Q_A)_0, e'\in (Q_B)_0,
\alpha \in (Q_A)_1, \beta\in (Q_B)_1\}$.
Then there is a $k$-algebra isomorphism
$$
A\otimes B \cong kQ_{A\otimes B}/\mathcal{I}_{A\otimes B}.
$$

\end{thm}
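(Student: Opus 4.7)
The plan is to build the isomorphism by descent from an obvious algebra map defined on generators.
First, I would define $\phi : kQ_{A\otimes B} \to A\otimes B$ by sending each vertex $(e,e')$ to $e \otimes e'$, each arrow $(\alpha,e')$ to $\alpha \otimes e'$, and each arrow $(e,\beta)$ to $e\otimes\beta$, extending multiplicatively to paths. The family $\{e\otimes e' : e\in(Q_A)_0,\, e'\in(Q_B)_0\}$ is a complete set of pairwise orthogonal idempotents of $A\otimes B$ summing to $1$, and the source/target conditions in $Q_{A\otimes B}$ match the corresponding idempotent multiplications in $A\otimes B$, so $\phi$ is a well-defined $k$-algebra homomorphism.

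Next, I would check that $\mathcal{I}_{A\otimes B}\subseteq\ker\phi$. Any element of $\mathcal{I}_A\times(Q_B)_0$ is a sum of terms $r\otimes e'$ with $r\in\mathcal{I}_A$, which maps to $\bar r\otimes e'=0$ in $A\otimes B$; the case $(Q_A)_0\times\mathcal{I}_B$ is symmetric. For the commutation generators, a direct computation gives
\[
\phi\bigl((\alpha,e')(e,\beta)\bigr)=(\alpha\otimes e')(e\otimes\beta)=\alpha e\otimes e'\beta,
\]
and similarly $\phi\bigl((e,\beta)(\alpha,e')\bigr)=e\alpha\otimes\beta e'$; both vanish unless $e=t(\alpha)$ and $e'=s(\beta)$, in which case both equal $\alpha\otimes\beta$. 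Hence $\phi$ descends to a well-defined algebra map $\bar\phi : kQ_{A\otimes B}/\mathcal{I}_{A\otimes B} \to A\otimes B$.

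For surjectivity, one observes that any basis tensor $p\otimes q$, where $p$ is a path in $Q_A$ of length $n$ and $q$ is a path in $Q_B$ of length $m$, is realized as the image of the concatenation in $kQ_{A\otimes B}$ obtained by first applying the $A$-arrows of $p$ (each paired with the source vertex of $q$) and then the $B$-arrows of $q$ (each paired with the target vertex of $p$). For injectivity I would compare $k$-dimensions: on one side $\dim_k(A\otimes B)=\dim_k A\cdot \dim_k B$, while on the other the commutation relations let one rewrite every path in $Q_{A\otimes B}$ in a normal form where all $A$-arrows precede all $B$-arrows, and then the remaining relations in $\mathcal{I}_A$ and $\mathcal{I}_B$ cut the resulting spanning set down to a family in bijection with the basic tensors $p\otimes q$.

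The main obstacle is precisely this dimension/injectivity step: one must show that the normal form above is unambiguous, i.e., that the rewriting system consisting of the commutation relations together with the generators of $\mathcal{I}_A$ and $\mathcal{I}_B$ is confluent on $kQ_{A\otimes B}$. This is essentially a Bergman diamond-lemma verification, checking that ambiguities such as moving an $A$-arrow past two successive $B$-arrows, or applying a relation in $\mathcal{I}_A$ after commuting with a $B$-arrow, resolve to the same element; it is the only genuinely technical point of the argument and is precisely what is carried out in \cite{L1994}.
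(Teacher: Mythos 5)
The paper offers no proof of this statement at all: it is imported verbatim as \cite[Lemma 1.3]{L1994}, so there is nothing internal to compare your argument against. Your outline is the standard and correct one: the generator-defined map $\phi$, the verification that $\mathcal{I}_{A\otimes B}\subseteq\ker\phi$, surjectivity via the ``$A$-arrows first, then $B$-arrows'' lifts of the tensors $p\otimes q$, and injectivity by a dimension count. One refinement worth noting: you do not actually need confluence of the rewriting system (the Bergman diamond-lemma check you flag as the technical core). It suffices to prove the weaker \emph{spanning} statement --- every path of $Q_{A\otimes B}$ is congruent modulo $\mathcal{I}_{A\otimes B}$ to a linear combination of normal-form paths, by an easy induction on the number of $B$-arrow/$A$-arrow inversions followed by reduction modulo $\mathcal{I}_A$ and $\mathcal{I}_B$. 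That gives $\dim_k\bigl(kQ_{A\otimes B}/\mathcal{I}_{A\otimes B}\bigr)\le\dim_k A\cdot\dim_k B$, and combined with the surjectivity of $\bar\phi$ onto a space of exactly that dimension, injectivity (and uniqueness of normal forms) follows for free. With that simplification your proposal is a complete and correct proof strategy; the only loose end is purely notational, namely pinning down which idempotents make the two sides of the commutation relation $(\alpha,e')(e,\beta)-(e,\beta)(\alpha,e')$ parallel paths, which your case analysis already handles implicitly.
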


\begin{example}\label{example}
Let $A$ be given by the quiver
\[
\xymatrix{
e_1 \ar@<0.5ex>[r]^{\alpha_1} 
& e_2 \ar@<0.5ex>[l]^{\alpha_2}
}
\]
bound by $\alpha^3=0$, and
$B$ be given by the quiver
\[
\xymatrix{
& e_1'\ar[ld]_{\beta_1}& \\
e_2'\ar[rr]_{\beta_2}&&e_3'\ar[lu]_{\beta_3}
}
\]
bound by $\beta^4=0$.
Then the quiver of the tensor product $A\otimes B$ 
is given by
\[
\xymatrix{
&(e_1,e_1') \ar@<0.9ex>[rrrr]|{(\alpha_1, e_1')}\ar[ldd]_{(e_1, \beta_1)}
&
&
&
&(e_2,e_1') \ar@<0.9ex>[llll]|{(\alpha_2, e_1')}\ar[ldd]_{(e_2, \beta_1)}
&
\\
&&&&&&\\
(e_1,e_2') \ar@<0.5ex>@/^60pt/[rrrr]|{(\alpha_1, e_2')}\ar[rr]_{(e_1, \beta_2)}
&
&(e_1,e_3') \ar@<0.5ex>@/_55pt/[rrrr]|{(\alpha_1, e_3')}\ar[luu]^{(e_1, \beta_3)}
&
&
(e_2,e_2') \ar@<0.5ex>@/_55pt/[llll]|{(\alpha_2, e_2')}\ar[rr]_{(e_2, \beta_2)}
&
&(e_2,e_3') \ar@<0.5ex>@/^60pt/[llll]|{(\alpha_2, e_3')}\ar[luu]_{(e_2, \beta_3)}\\
}
\]
bound by $(\alpha, e')^3=0, (e, \beta)^4=0$
and $(\alpha, e')(e, \beta)= (e, \beta)(\alpha, e')$.
\end{example}

%%%%%%%%%%%%%%%%%%%%%%%%%%%%%%%%
%%%%%%%%%%%%%%%%%%%%%%%%%%%%%%%%
%%%%%%%%%%%%%%%%%%%%%%%%%%%%%%%%
%%%%%%%%%%%%%%%%%%%%%%%%%%%%%%%%
%%%%%%%%%%%%%%%%%%%%%%%%%%%%%%%%
%%%%%%%%%%%%%%%%%%%%%%%%%%%%%%%%
%%%%%%%%%%%%%%%%%%%%%%%%%%%%%%%%
%%%%%%%%%%%%%%%%%%%%%%%%%%%%%%%%
%%%%%%%%%%%%%%%%%%%%%%%%%%%%%%%%

\section{Main Theorem}\label{mainsection}

In this section
we state the main theorems stated in Section 
\ref{introduction}, and give their proofs.
%
%\color{red}
Let $A$ be an algebra.
We recall that an $A$-module $M$ is a brick
if $\End_A(M, M)\cong k$.
Moreover we recall that the algebra $A$
is $\tau$-tilting finite if and only if there are only 
finitely many isomorphism classes of bricks over $A$
(see \cite[Theorem 1.4]{DIJ2019}).
We prepare the following lemmas.
%
%\color{black}

%Lemma$
\begin{lemma}\label{lemma1}
Let $A$ be an algebra and $U$ an indecomposable $A$-module.
For the indecomposable decomposition 
${\rm soc}\,U=\bigoplus_i S_i$ of ${\rm soc}\,U$, 
if all $S_i$ are non-isomorphic to each other
and each $S_i$ does not appear as a composition factor of $U/{\rm soc}\,U$,
then the $A$-module $U$ is a brick.
\end{lemma}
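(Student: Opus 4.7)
My plan is to prove $\End_A(U) \cong k$, which (since $U$ is indecomposable and $k$ is algebraically closed) is equivalent to $U$ being a brick. Because $U$ is indecomposable, $\End_A(U)$ is local, so it suffices to show that every nilpotent $f \in \End_A(U)$ vanishes.

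The first step is to restrict $f$ to the socle. Since $f(\soc U)$ is a semisimple submodule of $U$, it lies inside $\soc U$, so $f$ induces a nilpotent endomorphism of $\soc U = \bigoplus_i S_i$. Using the pairwise non-isomorphism of the $S_i$ together with Schur's lemma, I get $\End_A(\soc U) \cong \prod_i k$; since this is reduced, it has no non-zero nilpotent elements. This forces $f|_{\soc U} = 0$, and hence $\soc U \subseteq \ker f$.

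The second step brings in the composition-factor hypothesis. From $\soc U \subseteq \ker f$, the map $f$ factors as $U \twoheadrightarrow U/\soc U \xrightarrow{\tilde f} U$, with $\operatorname{Im}\tilde f = \operatorname{Im} f$. If $f$ were non-zero, then $\operatorname{Im} f$ would be a non-zero submodule of $U$, so its socle would contain some $S_i$; but $\operatorname{Im}\tilde f$ is a quotient of $U/\soc U$, so this $S_i$ would appear as a composition factor of $U/\soc U$, contradicting the hypothesis. Hence $f = 0$.

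I do not foresee a substantive obstacle: the two hypotheses slot in essentially independently. Pairwise non-isomorphism of the $S_i$ is exactly what converts the nilpotency of $f$ on $U$ into vanishing on $\soc U$ via Schur, and the composition-factor condition then blocks any non-zero map from $U/\soc U$ landing on the socle. The only point worth spelling out carefully is the very first observation that any $A$-endomorphism of $U$ preserves $\soc U$, but this is immediate from semisimplicity of $\soc U$ and the fact that $\soc U$ is the largest semisimple submodule of $U$.
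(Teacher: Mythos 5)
Your proof is correct, and it takes a genuinely different (and arguably cleaner) route than the paper's. The paper also reduces to showing that a nilpotent $f\in\End_A(U)$ leads to a contradiction, but it argues ``forwards'': it picks a simple summand $S$ of $\soc f(U)$, chooses a minimal submodule $V\le U$ with $f(V)=S$, observes ${\rm top}\,V\cong S$, and uses the fact that $S$ occurs exactly once as a composition factor of $U$ (this is where both hypotheses enter simultaneously) to force $V=S$ and hence $f(S)=S$, contradicting nilpotency. You instead split the two hypotheses into two independent steps: pairwise non-isomorphism of the $S_i$ gives $\End_A(\soc U)\cong\prod_i k$, which is reduced, so the nilpotent $f$ must annihilate $\soc U$; the composition-factor hypothesis then forbids the induced map $U/\soc U\to U$ from having non-zero image, since that image's socle would sit inside $\soc U$. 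Your version avoids the minimal-submodule construction and the slightly delicate claim that ${\rm top}\,V\cong S$ forces $V=S$, and it makes transparent exactly which hypothesis is doing which job; the paper's version has the mild advantage of exhibiting an explicit fixed simple $f(S)=S$, which is the more traditional ``trace the socle'' argument. Both are complete proofs of the lemma.
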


\begin{proof}
To prove that the $A$-module $U$ is a brick, 
assume that there is a nonzero endomorphism $f : U\rightarrow U$ 
which is not isomorphism.
Since ${\rm End}_A(U)$ is a local algebra by the indecomposability of $U$,
the endomorphism $f$ is nilpotent.
Also since the endomorphism $f$ is nonzero,
we have that $0 \neq f(U) \leq U$. 
In particular, we have that $0 \neq {\rm soc}\,f(U) \leq {\rm soc}\,U$.
Take an indecomposable summand $S$ of ${\rm soc}\,f(U)$,
then it is also an indecomposable summand of  ${\rm soc}\,U$ too.
Now, if we take a minimal submodule $V$ of $U$ with $f(V)=S$,
it holds that ${\rm top}\,V \cong S$.
Since the simple $A$-module $S$ appears as a composition factor of $U$ only one time
and it is a direct summand of ${\rm soc}\,U$,
the $A$-submodule $V$ of $U$ must be $S$.
Hence we have that $f(S)=S$, which implies that $f^n(S)=S$ for all positive integer $n$,
but this contradicts  the fact that $f$ is nilpotent. 
Therefore all nonzero endomorphisms of $U$ are isomorphisms.
\end{proof}

%%%%%

%\color{red}
\begin{lemma}\label{path-lemma}
Let $A\cong kQ_A/\mathcal{I}_A$ be 
a non-local indecomposable symmetric algebra,
where $Q_A$ is an Ext-quiver of $A$ and 
$\mathcal{I}_A$ is a relation ideal.
Then there exists a path
\begin{align*}
(1\xrightarrow{\beta^{(1)}_1} 
&\cdots 
\xrightarrow{\beta^{(l_1-1)}_{1}} 1
\xrightarrow{\beta^{(l_1)}_{1}} 2
\xrightarrow{\beta^{(1)}_2} 
\cdots 
\xrightarrow{\beta^{(l_2-1)}_{2}} 2
\xrightarrow{\beta^{(l_2)}_{2}} 3
\xrightarrow{\beta^{(1)}_{3}}\\
&\cdots 
\xrightarrow{\beta^{(l_{i-1}-1)}_{i-1}} i-1
\xrightarrow{\beta^{(l_{i-1})}_{i-1}} i
\xrightarrow{\beta^{(1)}_i} 
\cdots 
\xrightarrow{\beta^{(l_i-1)}_{i}} i
\xrightarrow{\beta^{(l_i)}_{i}} i+1
\xrightarrow{\beta^{(1)}_{{i+1}}} \cdots 
\xrightarrow{\beta^{(l_{m-1})}_{m-1}} m
\xrightarrow{\beta^{(l_m)}_{m}}1)
\end{align*}
which is not a loop in $Q_A$ with
$\beta^{(1)}_1
\beta^{(2)}_1
\cdots
\beta^{(l_1)}_1
\beta_2^{(1)}
\beta_2^{(2)}
\cdots
\beta^{(l_2)}_2
\beta^{(1)}_3
\cdots
\beta^{(l_{i})}_i
\beta^{(1)}_{i+1}
\cdots
\beta_m^{(l_m)}
\not\in \mathcal{I}_A$.
\end{lemma}
\begin{proof}
Since $A$ is a non-local symmetric algebra,
for any indecomposable projective $A$-module $P$
we have that 
$P/{\rm rad}\,P\cong {\rm soc}\,P$
and that
${\rm rad}\,P/{\rm soc}\,P$ contains a simple $A$-module
distinct to ${\rm soc}\,P$.
Hence we can take a path
satisfying the following conditions:
\begin{enumerate} 
\item
it is not a loop.
\item
the source coincides with the target.
\item
the path is not in $\mathcal{I}_A$.
\end{enumerate}
Here, if there is a path with satisfying the
three properties within the path,
we retake the shorter path.
By repeating this,
we will get the desired path.
\end{proof}

\begin{rem}
Without the assumption that
the algebra $A$ is symmetric,
Lemma \ref{path-lemma}
does not hold in general.
For example, 
for the self-injective algebra
$A:=\xymatrix{
1 \ar@<0.5ex>[r]^{\alpha}&2\ar@<0.5ex>[l]^{\beta}}/
\langle \alpha\beta, \beta\alpha\rangle$
which is a not symmetric algebra,
there cannot be a path as the one stated in Lemma  \ref{path-lemma}.
\end{rem}

%\color{black}

%%%%%
We are now ready to state and prove our result
on tensor products of symmetric algebras.
For an algebra $A\cong kQ_A/\mathcal{I}_A$,
we denote by ${\rm rep}_k(Q, \mathcal{I})$
the category of finite dimensional $k$-linear
representation of $Q_A$ bound by $\mathcal{I}_A$
(see \cite[Section III]{ASS2006}).

\begin{thm}\label{theorem1}
Let $A$ and $B$ be indecomposable symmetric algebras over 
an algebraically closed field $k$ which are not local algebras.
Then the tensor product algebra $A\otimes B$ is a
$\tau$-tilting infinite algebra.
\end{thm}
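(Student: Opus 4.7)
The plan is to apply the Demonet--Iyama--Jasso criterion \cite{DIJ2019}, which says that a finite-dimensional algebra is $\tau$-tilting finite if and only if it admits only finitely many isomorphism classes of bricks. So I would exhibit an infinite family $\{M_\lambda\}_{\lambda\in k}$ of pairwise non-isomorphic $(A\otimes B)$-bricks, each shown to be a brick via Lemma \ref{lemma1}.

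First I would use the symmetric hypothesis to extract arrows: since $A$ is symmetric we have $\dim_k\Ext^1_A(S_i,S_j)=\dim_k\Ext^1_A(S_j,S_i)$, so the arrows of $Q_A$ between any two vertices come in oppositely-oriented pairs. Combined with the connectedness of $Q_A$ (from indecomposability of $A$) and $|(Q_A)_0|\ge 2$ (from non-locality), this yields distinct vertices $i,j\in(Q_A)_0$ together with arrows $\alpha\colon i\to j$ and $\alpha'\colon j\to i$, and similarly $\beta\colon i'\to j'$ and $\beta'\colon j'\to i'$ in $Q_B$. For each $\lambda\in k$, I would then define $M_\lambda$ as the representation of $Q_{A\otimes B}$ with a one-dimensional $k$-vector space at each of $(i,i'), (j,i'), (i,j'), (j,j')$ and zero elsewhere, where $(\alpha,e_{i'}), (e_i,\beta), (\alpha',e_{j'})$ act as $1$, the arrow $(e_j,\beta')$ acts as $\lambda$, and every other arrow of $Q_{A\otimes B}$ acts as $0$.

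Next I would verify that $M_\lambda$ respects $\mathcal I_{A\otimes B}$: each of the four commutativity relations evaluates as $0=0$ on both sides; any admissible relation in $\mathcal I_A$ pulled to a vertex of $Q_B$ is a length-$\ge 2$ composition which annihilates $M_\lambda$ either because it leaves the four-vertex support or because it contains one of $\alpha,\alpha'$ whose action is $0$ in the relevant layer, and the $\mathcal I_B$-relations behave symmetrically. Since all outgoing maps vanish at $(j,i')$ and $(i,j')$, we get $\soc M_\lambda=S_{(j,i')}\oplus S_{(i,j')}$: multiplicity-free, and disjoint from the composition factors of $M_\lambda/\soc M_\lambda\cong S_{(i,i')}\oplus S_{(j,j')}$. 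A connectedness argument on the four one-dimensional subspaces using the arrows of value $1$ (and of value $\lambda$ when $\lambda\ne 0$) shows $M_\lambda$ is indecomposable, and Lemma \ref{lemma1} then gives that each $M_\lambda$ is a brick.

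Finally, any isomorphism $M_\lambda\to M_{\lambda'}$ is a tuple of nonzero vertex scalars $(c_v)$; the three arrows of value $1$ force all $c_v$ to coincide, and then the arrow $(e_j,\beta')$ forces $\lambda=\lambda'$. Hence $\{M_\lambda\}_{\lambda\in k}$ is an infinite family of pairwise non-isomorphic bricks, so \cite{DIJ2019} yields that $A\otimes B$ is $\tau$-tilting infinite. The step I expect to be the main obstacle is the relation check: one must confirm, for arbitrary admissible ideals $\mathcal I_A, \mathcal I_B$, that the asymmetric $0/1$ pattern of the arrow actions truly annihilates every length-$\ge 2$ composition appearing in the relations of $A\otimes B$.
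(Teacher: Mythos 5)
Your overall strategy (produce an infinite family of pairwise non-isomorphic bricks $M_\lambda$, check brickness via Lemma \ref{lemma1}, and invoke \cite{DIJ2019}) is exactly the paper's strategy, and your four-vertex ``commutative square'' module would indeed be a valid and much shorter construction \emph{if} the arrows you need existed. But the step where you extract them is wrong: it is \textbf{not} true that a symmetric algebra satisfies $\dim_k\Ext^1_A(S_i,S_j)=\dim_k\Ext^1_A(S_j,S_i)$. What symmetry of $A$ gives you is a nondegenerate pairing $e_iAe_j\times e_jAe_i\to k$, hence a symmetric \emph{Cartan matrix}; this says nothing about $\rad A/\rad^2 A$. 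A concrete counterexample is the symmetric Nakayama algebra with $e\geq 3$ simples (equivalently, the Brauer tree algebra of a star): it is an indecomposable, non-local, symmetric algebra whose ordinary quiver is the oriented cycle $1\to 2\to\cdots\to e\to 1$, so $\Ext^1(S_1,S_2)\neq 0$ while $\Ext^1(S_2,S_1)=0$, and the quiver contains no $2$-cycle at all. For such $A$ (and $B$) your module $M_\lambda$ cannot even be defined, so the proof does not get off the ground in general.

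This missing point is precisely the difficulty the paper's proof is organized around: instead of a $2$-cycle, it chooses a minimal-length oriented cycle $C_A$ (of some length $n\geq 2$) in $Q_A$ whose composite is nonzero in $A$, and similarly a cycle $C_B$ of length $m$ in $Q_B$ (these exist for any non-local symmetric algebra, since each projective has socle equal to its top), and then replaces your square by a longer ``staircase'': the module $S_1\otimes U$ for a suitable uniserial-type quotient $U$ of $P(T_1)$, extended by a chain of one-dimensional spaces along the two cycles, with the parameter $\lambda$ placed on one closing arrow. The indecomposability and separation-of-parameters arguments then require more care (the paper compares scalars along the staircase and along a path inside $S_1\otimes U$). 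Your verification of the relations, of the socle condition, of indecomposability, and of $M_\lambda\not\cong M_{\mu}$ are all fine for the configuration you assume; the gap is solely the unjustified (and false) existence of oppositely-oriented arrow pairs, and repairing it essentially forces you back to the paper's cycle-based construction.
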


\begin{proof}
If there is a multiple arrow which is not a loop
in $Q_A$ or $Q_B$, then
we can easily show that 
$A\otimes B$ is a $\tau$-tilting infinite algebra
(for example, see \cite[Corollary 1.9]{DIRT} 
%\color{red}
or
\cite[Main Theorem 2]{Ka2017})
%\color{black},
so we may assume that there are no multiple arrows
which are not loops
in $Q_A$ and $Q_B$. 
Since the algebras $A$ and $B$ are
symmetric algebras which are not local algebras,
%\color{red}
by Lemma \ref{path-lemma},
we can take a path
$C_A:=(1\xrightarrow{\alpha_{i_0}} 
i_1\xrightarrow{\alpha_{i_1}} 
i_2\xrightarrow{\alpha_{i_2}} 
\cdots 
\xrightarrow{\alpha_{i_{l-1}}} i_{l}=n
\xrightarrow{\alpha_{i_{l}}}1)$
in $Q_A$ satisfying the following conditions:
\begin{enumerate}
\item
$1\leq i_1\leq i_2\leq \cdots \leq i_l=n$ and $i_{c+1}-i_c$ is equal to $0$ or $1$
for $0\leq c \leq l-1$.
\item
the path is not a loop (hence $1\neq n$).
\item
$\alpha_{i_0}\alpha_{i_1}\cdots \alpha_{i_{l}} \not\in \mathcal{I}_A.$
\end{enumerate}
We can take a similar path
$C_B:=(1\xrightarrow{\beta_{j_0}} 
j_1\xrightarrow{\beta_{j_1}} 
j_2\xrightarrow{\beta_{j_2}} 
\cdots 
\xrightarrow{\beta_{j_{l'-1}}} j_{l'}=m
\xrightarrow{\beta_{j_{l'}}}1)$
in $Q_B$.
We denote a unique arrow $i\rightarrow i+1$ in $Q_A$
by $\alpha_i$
and a unique arrow $j\rightarrow j+1$ in $Q_B$
by $\beta_j$ 
for $1\leq i \leq n$ and $1\leq j \leq m$.
%\color{black}
In these setting, we will construct a
brick $M^{(\lambda)}$ over $A\otimes B$
parameterized by the nonzero element $\lambda$ in $k$,
and show that $M^{(\lambda)}$ is not isomorphic to
$M^{(\mu)}$ if $\lambda$ is different from $\mu$.
This shows that the tensor product algebra $A\otimes B$
is a $\tau$-tilting infinite algebra by \cite[Theorem 1.4]{DIJ2019}.

First, for an arbitrary nonzero element
$\lambda$ in $k$ we define $M^{(\lambda)}$ 
by constructing a $k$-linear representation 
$M^{(\lambda)}=
(M^{(\lambda)}_{(i,j)}, \varphi^{(\lambda)}_{(\alpha, \beta)})
_{(i,j)\in (Q_{A\otimes B})_0, (\alpha, \beta) \in
(Q_{A\otimes B})_1}$
corresponding to the module $M^{(\lambda)}$
in the category ${\rm rep}_k(Q_{A\otimes B}, \mathcal{I}_{A\otimes B})$
of finite dimensional $k$-linear representations
of $Q_{A\otimes B}$ bound by $\mathcal{I}_{A\otimes B}$.
Without loss of generality, we may suppose that
$n\leq m$.
For the projective $B$-module $P(T_1)$ corresponding to 
the simple $B$-module $T_1$,
let $U$ be a minimal quotient of $P(T_1)$ such that the simple 
$B$-module $T_{m-n+2}$ appears as a composition 
factor just one time
%\color{red}
(we remark that we can choose such $U$
because $\beta_{j_0}\beta_{j_1}\cdots\beta_{j_{l'}}\not\in \mathcal{I}_B$).
%\color{black}
Then we remark that the socle of $U$ is isomorphic to $T_{m-n+2}$,
so the socle of $S_1\otimes U$ is 
isomorphic to $S_1\otimes T_{m-n+2}$
for the simple $A$-module $S_1$
and that
$S_1\otimes T_{m-n+2}$
appears as a composition factor
of $S_1 \otimes U$ just one time.
Hence
in the $k$-linear representation corresponding to 
$S_1 \otimes U$,
the vertex $(1, m-n+2)$ in $(Q_{A\otimes B})_0$ 
is a sink and is associated to 1-dimensional vector space $k$.
We will construct the $k$-linear representation 
$M^{(\lambda)}=
(M^{(\lambda)}_{(i,j)}, \varphi^{(\lambda)}_{(\alpha, \beta)})
_{(i,j)\in (Q_{A\otimes B})_0, (\alpha, \beta) \in
(Q_{A\otimes B})_1}$
by adding some extra nonzero spaces 
and nonzero maps
to the $k$-linear representation corresponding to 
$S_1 \otimes U$ in 
${\rm rep}_k(Q_{A\otimes B}, \mathcal{I}_{A\otimes B})$.
At first, in addition to the $k$-linear representation
of $S_1 \otimes U$,
we associate all vertices $(n-i, m-n+2+i)$ 
and $(n-i, m-n+3+i)$ in
$(Q_{A\otimes B})_0$ for $0\leq i \leq n-2$
to the 1-dimensional vector space $k$,
where the second component is 
taken modulo $m$.
Moreover we associate all arrows 
$(\alpha_{n-i}, e_{m-n+2+i})$
and
$(e_{n-i}, \beta_{m-n+2+i})$
for $0\leq i \leq n-2$
to the identity maps,
and we associate
the arrow $(\alpha_1, e_1')$ from the vertex $(1,1)$ to
$(2,1)$
to the multiplication 
by the $1\times l$ matrix
$[\lambda, 0, \cdots, 0]$, 
where $l$ means the dimension of the vector space
corresponding to the vertex $(1,1)$
and where the first component of this vector space
$k^l$ corresponds to
${\rm top}\,(S_1\otimes U)$
isomorphic to $S_1\otimes T_1$.
This finishes the construction of the
$k$-linear representation corresponding to $M^{(\lambda)}$.
We will show that the $A\otimes B$-module $M^{(\lambda)}$
is a brick,
and that it is not isomorphic to $M^{(\mu)}$
for the nonzero element in $\mu \in k$ different from $\lambda$.

%%%%%%%%%%%%%%%%%%%%%%%%%%%%%%%%
% the proof of the indecomposability & to be a  brick  
%%%%%%%%%%%%%%%%%%%%%%%%%%%%%%%%
By the construction of $M^{(\lambda)}$,
it is clear that the socle of $M^{(\lambda)}$ is isomorphic to
%\color{red} 
$(S_1\oplus T_{m-n+2}) \oplus (\bigoplus_{0\leq i \leq n-2} S_{n-i}\otimes T_{m-n+3+i})
=\bigoplus_{-1\leq i \leq n-2} S_{n-i}\otimes T_{m-n+3+i}$,
%\color{black}
where the indices of 
$S_j$ are taken modulo $n$ and those of $T_j$ are
taken modulo $m$,
whose any indecomposable summand does not appear
as composition factor of $M^{(\lambda)}/{\rm soc}\,M^{(\lambda)}$.
Therefore, by Lemma \ref{lemma1} it is enough to show that 
the $A\otimes B$-module $M^{(\lambda)}$ is indecomposable
in order to prove that it is a brick.

Assume that the $A\otimes B$-module $M^{(\lambda)}$  is not indecomposable.
Then there is an endomorphism $f : M^{(\lambda)}
\rightarrow M^{(\lambda)}$ with
$M^{(\lambda)} \cong {\rm Im}\,f \oplus {\rm Ker}\,f$.
In particular, it holds that 
${\rm soc}\,M^{(\lambda)} \cong 
{\rm soc}\,{\rm Im}\,f \oplus {\rm soc}\,{\rm Ker}\,f$,
where ${\rm soc}\,{\rm Im}\,f$ and ${\rm soc}\,{\rm Ker}\,f$
are nonzero.
By the construction of $M^{(\lambda)}$,
it holds either of the following:
\begin{enumerate}[(1)]
\item
For some $-1\leq i \leq n-3$
it holds that 
$S_{n-i}\otimes T_{m-n+3+i}$
is a direct summand of ${\rm soc}\,{\rm Im}\,f$
and that
$S_{n-(i+1)}\otimes T_{m-n+3+(i+1)}$
is a direct summand of ${\rm soc}\,{\rm Ker}\,f$,
\item
For some $-1\leq i \leq n-3$
it holds that 
$S_{n-i}\otimes T_{m-n+3+i}$
is a direct summand of ${\rm soc}\,{\rm Ker}\,f$
and that
$S_{n-(i+1)}\otimes T_{m-n+3+(i+1)}$
is a direct summand of ${\rm soc}\,{\rm Im}\,f$.
\end{enumerate}
If the case (1) holds,
then
the morphism $(f_{(i,j)})_{(i,j)\in (Q_{A\otimes B})_0}$
in ${\rm rep}_k(Q_{A\otimes B}, \mathcal{I}_{A\otimes B})$
corresponding to the endomorphism $f$
satisfies the following two equations
by the commutativity of 
$(\varphi^{(\lambda)}_{(\alpha, \beta)})_{(\alpha, \beta)\in (Q_{A\otimes B})_1}$
and $(f_{(i,j)})_{(i,j)\in (Q_{A\otimes B})_0}$
(see the diagram below):
$$\begin{cases}
 f_{(n-i,\, m-n+3+i)}\circ{\rm id}
= 
{\rm id}\circ f_{(n-(i+1),\, m-n+3+i)}
\\
{\rm id}\circ f_{(n-(i+1),\, m-n+3+i)}
= f_{(n-(i+1),\, m-n+3+(i+1))}\circ{\rm id}.
\end{cases}$$
\[
\xymatrix{
&k_{(n-(i+1), m-n+3+i)}\ar@{}[lddd]|{\circlearrowright}\ar[ld]_{{\rm id}}\ar[rd]^{{\rm id}}\ar@{.>}[dd]|{f_{(n-(i+1), m-n+3+i)}}
\ar@{}[rddd]|{\circlearrowleft}&&\\
k_{(n-i, m-n+3+i)}\ar@{.>}[dd]_{f_{(n-i, m-n+3+i)}}&  & k_{(n-(i+1), m-n+3+(i+1))}\ar@{.>}[dd]^{f_{(n-(i+1), m-n+3+(i+1))}}\\
&k_{(n-(i+1), m-n+3+i)}\ar[ld]_{{\rm id}}\ar[rd]^{{\rm id}}&&\\
k_{(n-i, m-n+3+i)}&  & k_{(n-(i+1), m-n+3+(i+1))}
}
\]
This shows that the two linear maps
$f_{(n-i,\, m-n+3+i)}: k\rightarrow k$ and
$f_{(n-(i+1),\, m-n+3+(i+1))}: k\rightarrow k$
are equal, but this contradicts
the assumption (1).
Similarly we can show that the case (2) 
cannot arise.
Therefore the $A\otimes B$-module $M^{(\lambda)}$
is indecomposable.

%%%%%%%%%%%%%%%%%%%%%%%%%%%%%%%%
% the proof that M^\lambda is not iso to M^\mu 
%%%%%%%%%%%%%%%%%%%%%%%%%%%%%%%%

Finally, we show that $A\otimes B$-module $M^{(\lambda)}$
is not isomorphic to
$M^{(\mu)}$ for pairwise distinct nonzero elements
$\lambda$ and $\mu$ in $k$.
Assume that there is an isomorphism 
$f: M^{(\lambda)}\rightarrow M^{(\mu)}$,
and we denote the corresponding morphism in
${\rm rep}_k(Q_{A\otimes B}, \mathcal{I}_{A\otimes B})$
by $(f_a)_{a\in (Q_{A\otimes B})_0}$.
%\color{red}
By the constructions of $M^{(\lambda)}$ and $M^{(\mu)}$,
via a similar argument as above,
we have the following two equations.
$$\begin{cases}
 f_{(n-i,\, m-n+3+i)}\circ{\rm id}
= 
{\rm id}\circ f_{(n-(i+1),\, m-n+3+i)}
\\
{\rm id}\circ f_{(n-(i+1),\, m-n+3+i)}
= f_{(n-(i+1),\, m-n+3+(i+1))}\circ{\rm id}.
\end{cases}$$
Summarizing the above equations, we have that
$f_{(n-i,\, m-n+3+i)}=
f_{(n-(i+1),\, m-n+3+(i+1))}$
for $-1\leq i \leq n-3$.
Hence we have that
$ f_{(1,m-n+2)} = f_{(n,m-n+3)}=
f_{(n-1,m-n+4)} =\cdots =f_{(2,1)}.$
%\color{black}
In particular, we may suppose that 
$f_{(1,m-n+2)} = f_{(2,1)} ={\rm id}$.
Moreover we regard
$f_{(1, 1)}$ as the multiplication
by an $l\times l$ matrix $\begin{bmatrix} a_{ij}\end{bmatrix}$
from the left.
Also the direct summand $S_1\otimes T_{m-n+2}$
of ${\rm soc}\,M^{(\lambda)}$ comes from
the socle of the tensor product of the simple $A$-module $S_1$
and the $B$-module $U$ which is a
quotient module of the projective $B$-module $P(T_1)$,
and $S_1 \otimes U$ has the simple top
$S_1\otimes T_1$.
Hence 
there is a path 
$$ (1,1)\xrightarrow{(e_1, \gamma_1)}
\cdots \xrightarrow{(e_1, \beta_{m-n+1})}  (1, m-n+2) $$
from the vertex $(1,1)$
to $(1, m-n+2)$ such that the composition
of the all maps on the arrows composing the path
can be considered as the multiplication by
$1\times l$ matrix
$[x ,0 ,\cdots,0 ]$ from the left for some nonzero element $x$ in $k$
(we remark that the arrow $(e_1, \gamma_1)$
is not necessarily the arrow $(e_1, \beta_1)$).
By the commutativity of 
$(\varphi^{(\lambda)}_{(\alpha, \beta)})_{(\alpha, \beta)\in 
(Q_{A\otimes B})_1}$,
$(\varphi^{(\mu)}_{(\alpha, \beta)})_{(\alpha, \beta)\in 
(Q_{A\otimes B})_1}$
and $(f_{(i,j)})_{(i,j)\in (Q_{A\otimes B})_0}$
we have that
$$\begin{cases}
f_{(1,m-n+2)} \circ\varphi_{(e_1, \beta_{m-n+1})}
\circ \cdots \circ \varphi_{(e_1, \gamma_{1})}
= \varphi_{(e_1, \beta_{m-n+1})}
\circ \cdots \circ \varphi_{(e_1, \gamma_{1})}\circ f_{(1,1)},
\\
f_{(2, 1)}\circ\varphi_{(\alpha_1, e_1')}
= \varphi_{(\alpha_1, e_1')}\circ f_{(1,1)},
\end{cases}$$
which implies that
$$\begin{cases}
[x, 0, \cdots, 0]= [x, 0, \cdots, 0][a_{ij}],
\\
[\lambda, 0, \cdots, 0] = [\mu, 0, \cdots, 0][a_{ij}].
\end{cases}$$
Therefore we have that $\lambda=\mu$.
\end{proof}

\begin{example}

Let $A$ and $B$ be the algebras defined in
Example \ref{example}.
%\color{red}
Since the algebras $A$ and $B$ are
symmetric Nakayama algebras,
they are finite representation type.
Hence they are $\tau$-tilting finite algebras.
We show that
%\color{black}
the tensor product $A\otimes B$ is 
a $\tau$-tilting infinite algebra.
In fact, 
%\color{red}
starting with
the paths
$C_A:=(e_1\xrightarrow{\alpha_1}
e_2
\xrightarrow{\alpha_2}e_1)$
and
$C_B:=(e'_1\xrightarrow{\beta_1}
e'_2
\xrightarrow{\beta_2}e'_3
\xrightarrow{\beta_3}e'_1),$
%\color{black}
for nonzero element $\lambda$ in $k$,
let $M^{(\lambda)}$ be an $A\otimes B$-module
whose $k$-linear representation is given as follows:

\[
\xymatrix{
&k \ar@<0.5ex>[rrrr]^{\lambda}\ar[ldd]_{{\rm id}}
&
&
&
&k \ar@<0.5ex>[llll]^{0}\ar[ldd]_{}
&
\\
&&&&&&\\
k \ar@<0.5ex>@/^50pt/[rrrr]^{}\ar[rr]_{{\rm id}}
&
&k \ar@<0.5ex>@/_50pt/[rrrr]^{0}\ar[luu]^{0}
&
&0 \ar@<0.5ex>@/_50pt/[llll]^{}\ar[rr]_{}
&
&k \ar@<0.5ex>@/^50pt/[llll]^{{\rm id}}\ar[luu]_{{\rm id}}
}
\]
Then the $A\otimes B$-module $M^{(\lambda)}$
parameterized by $\lambda$ is a brick.
Also, for a nonzero element $\mu$ in $k$ 
different from $\lambda$,
the $A\otimes B$-module $M^{(\lambda)}$ is not
isomorphic to $M^{(\mu)}$,
which means that $A\otimes B$ has
infinitely many pairwise nonisomorphic brick.
Therefore $A\otimes B$ is $\tau$-tilting infinite.

\end{example}

As an application of Theorem \ref{theorem1},
we get the following result
on the $\tau$-tilting finiteness of
block algebras of
direct products of finite groups.

%\color{red}
\begin{lemma}[{see \cite[Theorem 2.1]{AH2021}}]\label{Lemma-AI}
Let $A$ be a finite dimensional $\tau$-tilting finite algebra
and $R$ a finite dimensional local algebra.
Then we have a poset isomorphism 
$-\otimes_k R :\sttilt A \rightarrow \sttilt(A\otimes R)$. 
\end{lemma}

\begin{proof}
It is enough to show that
$-\otimes_k R :\twosilt A \rightarrow \twosilt(A\otimes R)$
is an isomorphism of partially ordered sets.
By the similar argument as the proof of \cite[Theorem 2.1]{AH2021},
we have that the map 
$-\otimes_k R :\twosilt A \rightarrow \twosilt(A\otimes R)$
is an injection.
However, under the assumptions that
the algebra $A$ is a $\tau$-tilting finite algebra,
the similar proof to the latter half of the proof of \cite[Theorem 2.1]{AH2021} works,
and we have that the injection is a poset isomorphism.
\end{proof}
%\color{black}

\begin{thm}\label{theorem2}
Let $G$ and $H$ be finite groups,
$k$ an algebraically closed field
and $B$ a block of the group algebra $k[G\times H]$
of the direct product of $G$ and $H$.
If the block algebra $B$ is $\tau$-tilting finite,
then there is a block $A$ of $kG$ or of $kH$
such that the set of support $\tau$-tilting modules over $B$
is isomorphic to that of $A$ as posets.
\end{thm}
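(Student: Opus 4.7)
My plan is to combine Theorem \ref{theorem1} with the standard block-theoretic identification of the group algebra of a direct product.

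First, I would use the algebra isomorphism $k[G\times H]\cong kG\otimes kH$; since $k$ is algebraically closed, the primitive central idempotents of $kG\otimes kH$ are tensor products of primitive central idempotents of $kG$ and of $kH$, so every block of $k[G\times H]$ has the form $B=A'\otimes A''$ with $A'$ a block of $kG$ and $A''$ a block of $kH$. Both $A'$ and $A''$ are indecomposable symmetric $k$-algebras because finite group algebras are symmetric and direct factors of symmetric algebras are symmetric. After passing to basic versions (which is harmless since Morita equivalence preserves both $\tau$-tilting finiteness and the poset of support $\tau$-tilting modules), Theorem \ref{theorem1} implies that if neither $A'$ nor $A''$ is local then $B=A'\otimes A''$ is $\tau$-tilting infinite, contradicting the hypothesis. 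Hence at least one factor, say $A'$, is local, so $A'$ has a unique simple module and $A'/\rad A'\cong k$.

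I would then distinguish two subcases. If $A''$ is also local, then $B$ is itself a local algebra, so $\sttilt B=\{0,B\}$ has exactly two elements, matching $\sttilt A'$, and we may take $A=A'$. If $A''$ is non-local, the target is $\sttilt B\cong \sttilt A''$; for this I would aim to show that under our hypotheses $A'$ must in fact be isomorphic to $k$, which would give $B\cong A''$ directly and let us take $A=A''$.

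The main obstacle is ruling out the remaining configuration in which $A'$ is local with $\rad A'\neq 0$ and $A''$ is non-local. My proposed approach is to extend the brick construction from the proof of Theorem \ref{theorem1}. There, the key combinatorial input is a minimal oriented cycle $C_A$ in $Q_{A'}$ of length $n\geq 2$; in the present situation, $\rad A'\neq 0$ forces $Q_{A'}$ to have a loop $\alpha$ at its unique vertex with $\alpha\notin\mathcal{I}_{A'}$, which can play the role of $C_A$ with $n=1$. Running the same construction using this loop and a genuine cycle $C_B$ in $Q_{A''}$ should produce a one-parameter family of pairwise non-isomorphic bricks over $A'\otimes A''$, so that $B$ would be $\tau$-tilting infinite by \cite[Theorem 1.4]{DIJ2019}. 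This contradicts the hypothesis, forcing $A'\cong k$, and completes the proof.
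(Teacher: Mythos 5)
Your first two steps (every block of $k[G\times H]$ is a tensor product $b_i\otimes b'_j$ of a block of $kG$ and a block of $kH$, and Theorem \ref{theorem1} excludes the case where both factors are non-local) agree with the paper, as does your handling of the case where both factors are local. The gap is your third case: the claim that $A'$ local with $\rad A'\neq 0$ and $A''$ non-local forces $A'\otimes A''$ to be $\tau$-tilting infinite is false, so no variant of the brick construction can establish it. Concretely, take $k$ of characteristic $3$, $A'=kC_3\cong k[x]/(x^3)$ and $A''=kS_3$ (a single block: an indecomposable, non-local Brauer tree algebra). Then $A'\otimes A''\cong k[C_3\times S_3]$ is a single block which is $\tau$-tilting finite, since the ideal generated by the central elements $c-1$ (for $c$ in the central subgroup $C_3$) lies in the radical and therefore $\sttilt\,k[C_3\times S_3]\cong\sttilt\,kS_3$, a finite poset. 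Hence you cannot force $A'\cong k$, and the strategy of reducing to an algebra isomorphism $B\cong A''$ cannot succeed.

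The proposed $n=1$ adaptation also degenerates on its own terms: when $C_A$ is a loop, $Q_{A'}$ has a single vertex and $A'$ a single simple module, so the index range $0\leq i\leq n-2$ in the construction of $M^{(\lambda)}$ is empty, every composition factor of every $A'\otimes A''$-module has the form $S_1\otimes T_j$ (so the socle condition needed for Lemma \ref{lemma1} cannot be arranged as in the proof of Theorem \ref{theorem1}), and the loop $(\alpha_1,e_1')$ would have to act by the nonzero scalar $\lambda$ even though it lies in the radical and must act nilpotently on any finite-dimensional module. This is exactly why Theorem \ref{theorem1} requires the cycles $C_A$ and $C_B$ not to be loops. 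The correct way to finish, and what the paper does, is to invoke \cite[Theorem 2.1]{AH2021}: if $b_i$ is a local algebra, then $\sttilt(b_i\otimes b'_j)\cong \sttilt\,b'_j$ as posets. This one citation disposes of both of your remaining cases uniformly, with no need to identify $A'$ with $k$.
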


\begin{proof}
Let $kG=\bigoplus_i b_i$ and $kH=\bigoplus_j b'_j$
be block decompositions.
Then the block decomposition of $k[G\times H]$
is given by $k[G\times H]=\bigoplus_{i,j} b_i \otimes b'_j$.
Since the block algebras of finite groups 
are symmetric algebras,
if both $b_i$ and $b'_j$ are not local algebras,
then the block algebra $b_i \otimes b'_j$ are
a $\tau$-tilting infinite algebra by Theorem \ref{theorem1}.
On the other hand, if the block algebra $b_i$
is a local algebra, then
the set of the support $\tau$-tilting 
modules over $b_i \otimes b'_j$ is isomorphic to 
that of $b'_j$ by 
%\color{red}
Lemma \ref{Lemma-AI}.
%\color{black}
The case the block algebra $b'_j$ is a local algebra
follows similarly too.
\end{proof}

%%%%%%%%%%%%%%%%%%%%%%%%%%%%%%%%
%%%%%%%%%%%%%%%%%%%%%%%%%%%%%%%%
%%%%%%%%%%%%%%%%%%%%%%%%%%%%%%%%
%%%%%%%%%%%%%%%%%%%%%%%%%%%%%%%%
%%%%%%%%%%%%%%%%%%%%%%%%%%%%%%%%
%%%%%%%%%%%%%%%%%%%%%%%%%%%%%%%%
%%%%%%%%%%%%%%%%%%%%%%%%%%%%%%%%
%%%%%%%%%%%%%%%%%%%%%%%%%%%%%%%%
%%%%%%%%%%%%%%%%%%%%%%%%%%%%%%%%

\noindent
{\bf Acknowledgments}
The author would like to thank the referee for valuable comments to improve the paper.
%%%%%%%%%%%%%%%%%%%%%%%%%%%%%%%%
%%%%%%%%%%%%%%%%%%%%%%%%%%%%%%%%
%%%%%%%%%%%%%%%%%%%%%%%%%%%%%%%%

\end{document}